\newtheorem{thm}{Theorem}
\newtheorem{prop}{Proposition}
\newtheorem{cor}{Corollary}
\newtheorem*{cor1}{Corollary 1}
\numberwithin{equation}{section}
\begin{document}

\title{An Euler operator approach to Ehrhart series}
\author{Wayne A. Johnson}
\address{Department of Mathematics\\
 Truman State University}
\ead{wjohnson@truman.edu}

\begin{abstract}
We use the ordinary Euler operator to compute the Ehrhart series for an arbitrary lattice polytope. The resulting formula involves the coefficients of the Ehrhart polynomial, combined via Eulerian numbers. We use this to compute $h^*_{d-1}$ in terms of the coefficients of the Ehrhart polynomial, resulting in a new linear inequality satisfied by the coefficents of the Ehrhart polynomial.
\vspace{1 pc}
\\
\noindent \emph{Keywords}: lattice polytope; Ehrhart polynomial; Ehrhart series; $h^*$-vector
\end{abstract}

\maketitle

\section{Introduction}

Let $\mathcal{P}$ be a $d$-dimensional, convex lattice polytope. For simplicity, assume that the vertices of $\mathcal{P}$ lie in the integer lattice in $\mathbb{R}^d$. Let $L_{\mathcal{P}}(t)$ be the number of integer lattice points in the $t$-fold dilation of $\mathcal{P}$. Eug\`{e}ne Ehrhart proved in \cite{EE} that $L_\mathcal{P}(t)$ is a polynomial in $t$. This polynomial is referred to as the Ehrhart polynomial of $\mathcal{P}$.

Let $L_\mathcal{P}(t)=L_0+L_1t+\dots+L_dt^d$. The coefficients and roots of $L_\mathcal{P}(t)$ have been studied extensively. In \cite{BDDPS}, the authors present a list of inequalities satisfied by the coefficients of $L_\mathcal{P}$ and explore its roots. One of the goals of this study was to classify the possible Ehrhart polynomials, at least in low dimension. This is discussed in Sections 2 and 3 of \cite{BDDPS}.

One way of computing the Ehrhart polynomial of $\mathcal{P}$ is to study its Ehrhart series. The Ehrhart series of $\mathcal{P}$ is the generating function of the values of $L_\mathcal{P}(t)$ at non-negative integers. In other words,
\begin{center}
$\text{Ehr}_\mathcal{P}(z):=\displaystyle\sum_{t\geq0}L_\mathcal{P}(t)z^t$.
\end{center}
The constant term $L_0$ is known to be 1 (see, for example, Chapter 3 of \cite{BR}). Ehrhart himself showed that $\text{Ehr}_\mathcal{P}(z)$ converges to a rational function of the form
\begin{center}
$\displaystyle\frac{h^*_0+h^*_1z+\dots+h^*_dz^d}{(1-z)^{d+1}}$.
\end{center}
In \cite{S1}, Stanley showed (in his celebrated non-negativity theorem), that the coefficients in the numerator are non-negative integers. The vector, $(h^*_0,\dots,h^*_d)$, formed by these integers is called the $h^*$-vector (or sometimes the $\delta$-vector) of $\mathcal{P}$ for its relationship with the standard $h$-vector of $\mathcal{P}$. Details of this relationship can be found in Chapter 10 of \cite{BDDPS}.

Of note in the present article is the computation of $\text{Ehr}_\mathcal{P}(z)$ when $\mathcal{P}$ is the unit $d$-dimensional cube, $[0,1]^d$ (see Chapter 2 of \cite{BDDPS} or Section 4.6 of \cite{S2}). In this case, we have
\begin{center}
$L_\mathcal{P}(t)=(t+1)^d$,
\end{center}
and thus,
\begin{center}
$\text{Ehr}_\mathcal{P}(z)=\displaystyle\sum_{t\geq0}(t+1)^dz^t$.
\end{center}
The series above is computed using the fact that
\begin{center}
$\displaystyle\sum_{j\geq0}j^dz^j=\left(z\frac{d}{dz}\right)^d\displaystyle\frac{1}{1-z}$
\end{center}
The differential operator $z\displaystyle\frac{d}{dz}$ is often called the Euler operator for its relationship with the Eulerian numbers. This relationship is discussed in more detail in the next section.

This paper takes the perspective of generalizing the computation for the unit cube to any lattice polytope. This can be done because the Euler operator can be used to compute a rational form for any power series whose coefficients are polynomial. The details of this are in the proof of Theorem 1 below. Here, we focus on the Corollary to Theorem 1.

\begin{cor1}
If $\mathcal{P}$ is a convex, $d$-dimensional lattice polytope, then we have
\begin{center}
$\text{Ehr}_{\mathcal{P}}(z)=\displaystyle\frac{1}{1-z}+\displaystyle\sum_{i=1}^d L_i\frac{\phi_i(z)}{(1-z)^{i+1}}$,
\end{center}
where $\phi_i(z)$ is the $i^\text{th}$ Euler polynomial.
\end{cor1}

This form of the Ehrhart series is useful for two reasons. Firstly, we can use this to compute the numerator (and thus the $h^*$-vector of $\mathcal{P}$) in terms of the coefficients of the Ehrhart polynomial recursively. Secondly, this computation (in combination with the fact that the terms in $h^*$ are non-negative) can be used to compute linear inequalities that the coefficients of the Ehrhart polynomial must satisfy. In \S3, we pursue both of these lines. We give a formula for a new linear inequality that the coefficients of the Ehrhart polynomial satisfy (in Theorem 2), and explore the recursive relationship for low-dimensional polytopes. The main content of \S2, below, is a proof of the main theorem and the above corollary.

\section{Main Theorem}

In this section, we present a proof of the main theorem, and its main corollary. The proof is elementary, relying only on the fact that the coefficients of $Ehr(z)$ are polynomial in $t$.

\begin{thm}
Let $\mathcal{P}$ be a lattice polytope with Ehrhart series $\text{Ehr}_{\mathcal{P}}(z)$. Then
\begin{center}
$\text{Ehr}_{\mathcal{P}}(z)=\left(1+L_1z\displaystyle\frac{d}{dz}+\dots+L_d\left(z\frac{d}{dz}\right)^d\right)\displaystyle\frac{1}{1-z}$.
\end{center}
\end{thm}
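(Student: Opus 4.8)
The plan is to read both sides as power series on the disk $|z|<1$ (equivalently, as elements of the ring of formal power series, where the Euler operator acts termwise) and match coefficients, starting from the geometric series identity $\frac{1}{1-z}=\sum_{t\ge 0}z^t$.

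First I would prove, by induction on $k$, the operator identity
\[
\left(z\frac{d}{dz}\right)^k\frac{1}{1-z}=\sum_{t\ge 0}t^k z^t .
\]
The base case $k=0$ is just the geometric series. For the inductive step, apply $z\frac{d}{dz}$ to $\sum_{t\ge 0}t^k z^t$: since this series has radius of convergence $1$, term-by-term differentiation is valid on $|z|<1$, and $z\frac{d}{dz}\bigl(t^k z^t\bigr)=t^{k+1}z^t$, so the result is $\sum_{t\ge 0}t^{k+1}z^t$, completing the induction.

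Next I would substitute the Ehrhart polynomial. Writing $L_{\mathcal{P}}(t)=\sum_{k=0}^{d}L_k t^k$, we obtain
\[
\text{Ehr}_{\mathcal{P}}(z)=\sum_{t\ge 0}L_{\mathcal{P}}(t)z^t=\sum_{t\ge 0}\left(\sum_{k=0}^{d}L_k t^k\right)z^t=\sum_{k=0}^{d}L_k\sum_{t\ge 0}t^k z^t ,
\]
where interchanging the finite sum over $k$ with the sum over $t$ is harmless. Applying the operator identity from the first step to each inner sum gives $\sum_{k=0}^{d}L_k\left(z\frac{d}{dz}\right)^k\frac{1}{1-z}$, which is precisely the claimed expression; the isolated $1\cdot\frac{1}{1-z}$ term corresponds to $k=0$, using the standard fact that $L_0=1$ (though for this theorem it only matters that $L_0$ is the constant coefficient, so the $k=0$ summand is $L_0/(1-z)$).

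The only point that needs care — and the step I expect to be the main obstacle, though a mild one — is the legitimacy of term-by-term differentiation, i.e. ensuring that the manipulations are genuine analytic identities on $|z|<1$ rather than merely suggestive. This is immediate from the standard theorem on differentiation of power series within their radius of convergence; alternatively, one can carry out the entire argument in $\mathbb{Q}[[z]]$, where $z\frac{d}{dz}$ is defined to act coefficientwise and no convergence question arises at all. Either way the proof is short and elementary.
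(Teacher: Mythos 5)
Your proposal is correct and follows essentially the same route as the paper: expand $\text{Ehr}_{\mathcal{P}}(z)=\sum_{t\ge 0}L_{\mathcal{P}}(t)z^t$, split by powers of $t$, and identify $\sum_{t\ge 0}t^k z^t$ with $\left(z\frac{d}{dz}\right)^k\frac{1}{1-z}$ by induction on $k$. The only difference is that you explicitly address the legitimacy of termwise differentiation (or the formal power series alternative), a point the paper leaves implicit.
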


\begin{proof}
First, note that
\begin{center}
$\text{Ehr}_{\mathcal{P}}(z)=\displaystyle\sum_{t\geq0}(1+L_1t+\dots+L_dt^d)z^t$\\
$=\displaystyle\sum_{t\geq0}(z^t+L_1tz^t+\dots+L_dt^dz^t)$\\
$=\displaystyle\sum_{t\geq0}z^t+L_1\sum_{t\geq0}tz^t+\dots+L_d\sum_{t\geq0}t^dz^t$.
\end{center}
To finish the proof, we compute $f_i(z):=\displaystyle\sum_{t\geq0}t^iz^t$ for arbitrary $i$. This computation is not new, but the details will be used in the sequel. Note that $f_0(z)=\displaystyle\sum_{t\geq0}z^t$. If we apply the differential operator $z\displaystyle\frac{d}{dz}$ to $f_i(z)$, we get
\begin{center}
$z\displaystyle\frac{d}{dz}f_i(z)=\sum_{t\geq0}t^iz\frac{d}{dz}z^t=\sum_{t\geq0}t^iztz^{t-1}$\\
$=\displaystyle\sum_{t\geq0}t^{i+1}z^t=f_{i+1}(z)$.
\end{center}
Therefore, 
\begin{center}
$f_{i}(z)=\displaystyle\left(z\frac{d}{dz}\right)^if_0(z)=\left(z\frac{d}{dz}\right)^i\frac{1}{1-z}$,
\end{center}
Putting it all together, we have
\begin{center}
$\text{Ehr}_{\mathcal{P}}(z)=\displaystyle\frac{1}{1-z}+L_1\left(z\frac{d}{dz}\right)\frac{1}{1-z}+\dots+L_d\left(z\frac{d}{dz}\right)^d\frac{1}{1-z}$\\
$=\left(1+L_1z\displaystyle\frac{d}{dz}+\dots+L_d\left(z\frac{d}{dz}\right)^d\right)\displaystyle\frac{1}{1-z}$.
\end{center}
\end{proof}

The terms in the Ehrhart series of the form $\left(z\displaystyle\frac{d}{dz}\right)^i\displaystyle\frac{1}{1-z}$ have an interesting and well-studied structure. In particular,
\begin{center}
$\left(z\displaystyle\frac{d}{dz}\right)^i\displaystyle\frac{1}{1-z}=\displaystyle\frac{\phi_i(z)}{(1-z)^{i+1}}$,
\end{center}
where $\phi_i(z)$ is the $i^\text{th}$ Euler polynomial. We then have an immediate corollary to the theorem.

\begin{cor}
Under the assumptions of the theorem, we have
\begin{center}
$\text{Ehr}_{\mathcal{P}}(z)=\displaystyle\frac{1}{1-z}+\displaystyle\sum_{i=1}^d L_i\frac{\phi_i(z)}{(1-z)^{i+1}}$.
\end{center}
\end{cor}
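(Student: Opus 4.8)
The plan is to derive the corollary directly from Theorem 1 by establishing the stated identity
\[
\left(z\frac{d}{dz}\right)^i\frac{1}{1-z}=\frac{\phi_i(z)}{(1-z)^{i+1}},
\]
where $\phi_i$ denotes the $i$th Euler polynomial, and then substituting term by term into the operator expression for $\text{Ehr}_{\mathcal{P}}(z)$ supplied by the theorem. Since the theorem gives $\text{Ehr}_{\mathcal{P}}(z)=\left(1+\sum_{i=1}^d L_i\left(z\frac{d}{dz}\right)^i\right)\frac{1}{1-z}$, and the $i=0$ contribution is simply $\frac{1}{1-z}$, the corollary is exactly the assertion that each higher term $\left(z\frac{d}{dz}\right)^i\frac{1}{1-z}$ collapses to $\frac{\phi_i(z)}{(1-z)^{i+1}}$.

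To prove that identity I would induct on $i$. The base case $i=0$ is the defining normalization $\phi_0(z)=1$. For the inductive step, assume $\left(z\frac{d}{dz}\right)^i\frac{1}{1-z}=\frac{\phi_i(z)}{(1-z)^{i+1}}$ with $\phi_i$ a polynomial of degree at most $i$, apply $z\frac{d}{dz}$, and use the quotient rule: a short computation gives
\[
z\frac{d}{dz}\frac{\phi_i(z)}{(1-z)^{i+1}}=\frac{z\bigl[(1-z)\phi_i'(z)+(i+1)\phi_i(z)\bigr]}{(1-z)^{i+2}}.
\]
This exhibits the left side in the required form with $\phi_{i+1}(z):=z\bigl[(1-z)\phi_i'(z)+(i+1)\phi_i(z)\bigr]$, a polynomial satisfying $\phi_{i+1}(0)=0$ and of degree at most $i+1$. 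The step I would flag as the main (though mild) obstacle is checking that this recurrence for $\phi_{i+1}$ is precisely the standard recurrence defining the Euler (Eulerian) polynomials, which requires pinning down the normalization convention; starting from $\phi_0=1$ one computes $\phi_1(z)=z$, $\phi_2(z)=z+z^2$, and $\phi_3(z)=z+4z^2+z^3$, matching the usual table of Eulerian numbers, so the recurrence above is the familiar one and the identification of $\phi_i$ with the $i$th Euler polynomial is legitimate.

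Finally, I would observe that all of this is already visible in the proof of Theorem 1: there the functions $f_i(z)=\sum_{t\ge0}t^iz^t$ were shown to satisfy $f_{i+1}=z\frac{d}{dz}f_i$ and $f_i=\left(z\frac{d}{dz}\right)^i\frac{1}{1-z}$, so the inductive computation above amounts to the classical fact that $f_i(z)(1-z)^{i+1}$ is a polynomial, which is the same fact underlying the rational form of $\text{Ehr}_{\mathcal{P}}$ to begin with. Substituting the identity into the theorem's formula, the $i=0$ term yields $\frac{1}{1-z}$ and the remaining terms yield $\sum_{i=1}^d L_i\frac{\phi_i(z)}{(1-z)^{i+1}}$, which is the claimed expression; since the manipulations are identities of formal power series that also hold as rational functions near $z=0$, the proof is complete.
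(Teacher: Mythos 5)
Your proposal is correct and follows the same route as the paper: the corollary is obtained by substituting the identity $\left(z\frac{d}{dz}\right)^i\frac{1}{1-z}=\frac{\phi_i(z)}{(1-z)^{i+1}}$ into the operator formula of Theorem 1. The only difference is that the paper simply cites this identity as well known, whereas you supply an inductive proof of it via the recurrence $\phi_{i+1}(z)=z\left[(1-z)\phi_i'(z)+(i+1)\phi_i(z)\right]$ and verify that it reproduces the Eulerian polynomials in the paper's normalization, which is a harmless (indeed welcome) addition.
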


It is this form of the Ehrhart series that we wish to study further. Note that the coefficients of the Euler polynomial $\phi_i(z)$ are called \textit{Eulerian numbers} and have been studied extensively. In particular, if we set
$\phi_i(z)=\displaystyle\sum_{j=0}^i a_{i,j}z^j$, then the Euler number $a_{i,j}$ is the number of permutations of $i$ with exactly $j-1$ rises (see \cite{EN} for more details and further combinatorial interpretations). Note that $a_{i,0}=0$ for all $i>0$.

\section{Computation of $h^*$-components}

In this section, we use the formula in Corollary 1 to compute the components of the $h^*$-vector. The content of this section has two flavors: (1) What can we say in general about a fixed component, $h^*_k$? and (2) Can we use the $h^*$-vector in low dimension to develop new constraints on the coefficients of $L_\mathcal{P}(t)$? The former is answered for $h_1^*$, $h_{d-1}^*$, and $h_d^*$. Only the formula for $h_{d-1}^*$ is new. The latter is answered in dimensions 3 and 4. 

Propositions 1 and 2 below are not new, but their proofs are. The proofs could be left as exercises, but we present them, as they illustrate how simple it is to deal with the form of the Ehrhart series in Corollary 1, as long as the Eulerian numbers involved are well-behaved. Propositions 3 and 4 provide a computation of $h^*_{d-1}$ in terms of the coefficients of $L_\mathcal{P}(t)$. These are then used to present a new linear inequality on these coefficients in Theorem 2.

\begin{prop}
Under the assumptions of the theorem, we have
\begin{center}
$h_1^*=L_1+\dots+L_d-d$.
\end{center}
\end{prop}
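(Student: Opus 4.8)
The plan is to start from the identity in Corollary~1 and clear denominators. Write $\mathrm{Ehr}_{\mathcal{P}}(z)=h^*(z)/(1-z)^{d+1}$ with $h^*(z)=h_0^*+h_1^*z+\dots+h_d^*z^d$. Multiplying the formula of Corollary~1 by $(1-z)^{d+1}$ and using $\tfrac{1}{1-z}=\tfrac{(1-z)^d}{(1-z)^{d+1}}$ gives
\[
h^*(z)=(1-z)^d+\sum_{i=1}^d L_i\,\phi_i(z)\,(1-z)^{d-i},
\]
and each summand is a genuine polynomial since $i\le d$. It then suffices to read off the coefficient of $z^1$ on the right-hand side.

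First I would record the two facts about the bottom Eulerian numbers that the computation needs: $a_{i,0}=0$ for every $i\ge 1$ (so $\phi_i(0)=0$), and $a_{i,1}=1$ for every $i\ge 1$, since the only permutation of $\{1,\dots,i\}$ with no rise is the decreasing one; hence $\phi_i(z)=z+O(z^2)$. In particular the constant term of $L_i\phi_i(z)(1-z)^{d-i}$ is $0$, so the constant term of $h^*(z)$ comes entirely from $(1-z)^d$, re-confirming $h_0^*=1$.

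Next I would extract the coefficient of $z$. From $(1-z)^d=1-dz+\cdots$ this contributes $-d$. From $L_i\phi_i(z)(1-z)^{d-i}$, the coefficient of $z$ equals $L_i\bigl(a_{i,1}\binom{d-i}{0}-(d-i)\,a_{i,0}\bigr)=L_i(1-0)=L_i$, using the two facts above. Summing over $i=1,\dots,d$ and adding the contribution from $(1-z)^d$ yields $h_1^*=L_1+\dots+L_d-d$.

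There is essentially no obstacle beyond bookkeeping; the one point requiring care is the behavior of $a_{i,0}$ and $a_{i,1}$ at the bottom of each Euler polynomial, since the argument hinges on $\phi_i$ vanishing at $z=0$ with linear coefficient $1$. This is also the kind of input that, as the paper notes, makes the analogous computations of $h_{d-1}^*$ and $h_d^*$ more delicate, where the top Eulerian numbers enter as well.
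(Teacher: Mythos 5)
Your proof is correct, but it takes a different route from the paper. The paper proves the proposition by induction on $d$: it assumes the numerator in dimension $k-1$ has linear coefficient $L_1+\dots+L_{k-1}-(k-1)$, multiplies that fraction by $(1-z)$ to reach the common denominator $(1-z)^{k+1}$, and adds the single new term $L_k\phi_k(z)$, tracking only the coefficient of $z$. You instead clear denominators once and for all, writing $h^*(z)=(1-z)^d+\sum_{i=1}^d L_i\,\phi_i(z)\,(1-z)^{d-i}$, and read off the coefficient of $z$ directly; the needed inputs ($a_{i,0}=0$ and $a_{i,1}=1$, i.e.\ $\phi_i(z)=z+O(z^2)$) are exactly the ``lowest order term of $\phi_i$ is $z$'' fact the paper also leans on, so the two arguments have the same combinatorial core. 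Your non-inductive version is arguably cleaner here and scales better: the same identity gives a closed formula for every $h_k^*$ as $(-1)^k\binom{d}{k}+\sum_{i=1}^d L_i\sum_{j}a_{i,j}(-1)^{k-j}\binom{d-i}{k-j}$, whereas the paper's recursion requires re-expanding row by row (which is precisely what it does for $h_{d-1}^*$ in Propositions 3 and 4). What the paper's induction buys in exchange is that it sets up the recursive bookkeeping (the ``multiply by $(1-z)$ and add $L_k\phi_k$'' step) that is reused verbatim in the later sections, so the two proofs are serving slightly different expository purposes.
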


\begin{proof}
We prove this by induction on $d$. If $d=1$, then we have
\begin{center}
$\text{Ehr}_{\mathcal{P}}(z)=\displaystyle\frac{1}{1-z}+\frac{L_1\phi_1(z)}{(1-z)^2}$\\
$=\displaystyle\frac{1-z+L_1z}{(1-z)^2}=\frac{1+(L_1-1)z}{(1-z)^2}$.
\end{center}
Now, assume the proposition holds for $d=k-1$. Then, if $d=k$,  we have
\begin{center}
$\text{Ehr}_{\mathcal{P}}(z)=\displaystyle\frac{1+(L_1+\dots+L_{k-1}-(k-1))z+\text{H.O.T.}}{(1-z)^{k}}+\frac{L_k\phi_k(z)}{(1-z)^{k+1}}$
\end{center}
Note that H.O.T. stands for ``higher order terms" which do not contribute to $h_1^*$. Multiplying the fraction on the left by $(1-z)$ on top and bottom and adding fractions yields
\begin{center}
$\displaystyle\frac{1+(L_1+\dots+L_{k-1}-(k-1))z-z+\text{H.O.T.}+L_kz+\text{H.O.T.}}{(1-z)^{k+1}}$
\end{center}
Combining like terms gives $L_1+\dots+L_k-k$ as the coefficient of $z$.
\end{proof}

Note that, since $h_1^*$ is a positive integer, it follows immediately that 
\begin{center}
$L_1+\dots+L_d\geq d$.
\end{center}
This is inequality (7) in Theorem 3.5 in \cite{BDDPS}.

The above proof is made simple by the fact that the lowest order term of $\phi_d(z)$ is $z$. Similarly, the next proof relies on the fact that the highest order term of $\phi_d(z)$ is $z^d$.

\begin{prop}
Under the assumptions of the theorem, we have
\begin{center}
$h_d^*=\displaystyle\sum_{i=0}^d(-1)^{d+i}L_i$.
\end{center}
\end{prop}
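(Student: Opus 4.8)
The plan is to read the top coefficient of the $h^*$-polynomial straight off the closed form in Corollary 1. Write $h^*(z)=h^*_0+h^*_1z+\dots+h^*_dz^d$ for the numerator of $\text{Ehr}_{\mathcal{P}}(z)$, so that $h^*(z)=(1-z)^{d+1}\text{Ehr}_{\mathcal{P}}(z)$. Substituting the formula of Corollary 1, and using $L_0=1$ together with $\phi_0(z)=1$ (so that the leading term $\tfrac{1}{1-z}$ becomes $(1-z)^d$), this becomes
\begin{center}
$h^*(z)=\displaystyle\sum_{i=0}^d L_i\,\phi_i(z)\,(1-z)^{d-i}$.
\end{center}
It then suffices to extract the coefficient of $z^d$ on the right-hand side.

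Next I would do a degree count on each summand. Since $\phi_i(z)=\sum_{j=0}^i a_{i,j}z^j$ has degree exactly $i$ with leading coefficient $a_{i,i}=1$ (the increasing permutation is the unique permutation of $i$ with $i-1$ rises), and $(1-z)^{d-i}$ has degree $d-i$ with leading coefficient $(-1)^{d-i}$, the product $\phi_i(z)(1-z)^{d-i}$ has degree exactly $d$ and leading coefficient $(-1)^{d-i}=(-1)^{d+i}$. Hence the coefficient of $z^d$ coming from the $i$-th summand is precisely $(-1)^{d+i}L_i$, with no contribution possible from lower-order terms of $\phi_i$ or of $(1-z)^{d-i}$. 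Summing $i$ from $0$ to $d$ gives $h^*_d=\sum_{i=0}^d(-1)^{d+i}L_i$, as claimed; as a bonus this re-confirms that $h^*(z)$ has degree at most $d$.

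There is no genuine obstacle here — the only thing demanding care is the bookkeeping of leading coefficients, namely that $\phi_i$ is monic of degree $i$ for $i\ge 1$ (the top Eulerian number $a_{i,i}=1$) and that $(-1)^{d-i}=(-1)^{d+i}$. For stylistic consistency with Proposition 1 one could instead induct on $d$: the first $d$ terms of the series in Corollary 1 sum to a rational function with denominator $(1-z)^d$ whose numerator has $z^{d-1}$-coefficient equal, by the inductive hypothesis, to $\sum_{i=0}^{d-1}(-1)^{d-1+i}L_i$; clearing to denominator $(1-z)^{d+1}$ multiplies that numerator by $(1-z)$, negating this coefficient and moving it to degree $d$, while the last term $L_d\phi_d(z)/(1-z)^{d+1}$ contributes $a_{d,d}L_d=L_d$ in degree $d$, again yielding $\sum_{i=0}^d(-1)^{d+i}L_i$. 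Either route works; I would present the direct one.
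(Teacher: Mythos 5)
Your argument is correct, but your primary route differs from the paper's. The paper proves this proposition by induction on $d$, peeling off the last term $L_d\phi_d(z)/(1-z)^{d+1}$ and tracking how multiplication of the remaining numerator by $(1-z)$ shifts the inductively-known coefficient of $z^{d-1}$ up to degree $d$ with a sign flip. You instead clear denominators once and for all, writing $h^*(z)=\sum_{i=0}^d L_i\,\phi_i(z)(1-z)^{d-i}$, and read off the coefficient of $z^d$ by a degree count: each summand has degree exactly $d$ with leading coefficient $(-1)^{d-i}L_i$, since $\phi_i$ is monic of degree $i$ (the top Eulerian number $a_{i,i}=1$, there being a unique permutation with the maximal number of rises). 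This direct extraction is cleaner and arguably more robust than the paper's induction, which is really an induction on partial sums of the series rather than on genuine $(d-1)$-dimensional polytopes; your version makes no such implicit identification and also yields, for free, the observation that the numerator has degree at most $d$. The inductive variant you sketch at the end is essentially the paper's proof. Both are valid; the one point your direct route leans on that the paper's does not is the identity $a_{i,i}=1$ (together with the convention $\phi_0(z)=1$ for the $i=0$ term), which you justify adequately.
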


\begin{proof}
We prove this by induction on $d$. If $d=1$, we have $h_1^*=-1+L_1$, as above. Now, assume the proposition holds for $d=k-1$. Then, if $d=k$, the Ehrhart series of $\mathcal{P}$ has the form
\begin{center}
$\displaystyle\frac{\text{L.O.T}+\left((-1)^{k-1}+\displaystyle\sum_{i=1}^{k-1}(-1)^{k-1+i}L_i\right)z^{k-1}}{(1-z)^k}+\displaystyle\frac{L_k\phi_k(z)}{(1-z)^{k+1}}$
\end{center}
Note that L.O.T. stands for ``lower order terms" which do not contribute to $h_d^*$. We once again get a common denominator by multiplying the top and bottom of the leftmost fraction by $(1-z)$. This yields the following (for simplicity, we only write the numerator):
\begin{center}
$\text{L.O.T}-z\left((-1)^{k-1}+\displaystyle\sum_{i=1}^{k-1}(-1)^{k-1+i}L_i\right)z^{k-1}+\text{L.O.T.}+L_kz^k$.
\end{center}
Simplifying the coefficient of $z^k$ gives
\begin{center}
$(-1)^k+\displaystyle\sum_{i=1}^{k-1}(-1)^{k+i}L_i + L_k=(-1)^{k}+\displaystyle\sum_{i=1}^k(-1)^{k+i}L_i$,
\end{center}
as desired.
\end{proof}

As with $h^*_1$, since $h^*_d\geq0$, the above theorem implies the inequality
\begin{center}
$\displaystyle\sum_{i=0}^d(-1)^{d+i}L_i\geq0$,
\end{center}
which is inequality (11) in \cite{BDDPS}.

As a next example, let $d=3$. Since $h_0^*=1$, and we can compute $h_1^*$ and $h_3^*$ from the propositions, to compute the $h^*$-vector, we need only compute $h_2^*$. This is not hard to do, and it yields
\begin{center}
$h^*=(1,L_1+L_2+L_3-3, -2L_1+4L_3+3, L_1-L_2+L_3-1)$.
\end{center}
We can think of this as three linear equations in three unknowns. Namely,
\begin{center}
$h_1^*=L_1+L_2+L_3-3$\\
$h_2^*=-2L_1+4L_3+3$\\
$h_3^*=L_1-L_2+L_3-1$,
\end{center}
where the unknowns are $L_1$, $L_2$, and $L_3$. Solving this system yields formulas for the coefficients of the non-constant terms of the Ehrhart polynomial of $\mathcal{P}$:
\begin{center}
$L_1=\displaystyle\frac{2h_1^*-h_2^*+2h_3^*+11}{6}$\\
$L_2=\displaystyle\frac{h_1^*-h_3^*+2}{2}$\\
$L_3=\displaystyle\frac{h_1^*+h_2^*+h_3^*+1}{6}$.
\end{center}
Thus, for a polynomial of degree 3 to be an Ehrhart polynomial, it must satisfy these three relations. In particular, the Ehrhart polynomial of a polytope of dimension 3 must be of the form
\begin{center}
$1+\displaystyle\frac{2h_1^*-h_2^*+2h_3^*+11}{6}t+\displaystyle\frac{h_1^*-h_3^*+2}{2}t^2+\displaystyle\frac{h_1^*+h_2^*+h_3^*+1}{6}t^3$.
\end{center}

We now turn to the problem of computing $h_{d-1}^*$. To do this, we define some notation. If $\mathcal{P}$ is $d$-dimensional, we set
\begin{center}
$_{d-1}h^*_{d-1}:=\displaystyle\sum_{i=0}^{d-1}(-1)^{d-1+i}L_i$.
\end{center}
This is the formula from Proposition 2 for a $d-1$-dimensional polytope, but the coefficients are from the Ehrhart polynomial of a $\mathcal{P}$. We define $_{d-1}h^*_{d-2}$ in the same way. Then, general $d$, $h^*_{d-1}$ can be found recursively by computing
\begin{center}
$_{d-1}h^*_{d-1}$ $-$ $_{d-1}h^*_{d-2}+a_{d,d-1}L_d$,
\end{center}
as this is the coefficient of $z^{d-1}$ in the numerator of $\text{Ehr}_\mathcal{P}(z)$, expanded using Corollary 1. We now focus on this computation. Note that two of the terms are not complicated. We have
\begin{center}
$_{d-1}h^*_{d-1}=\displaystyle\sum_{i=0}^{d-1}(-1)^{d-1+i}L_i$,
\end{center}
by Proposition 2, and it is known that $a_{d,d-1}=2^d-d-1$ (see \cite{EN1}). We need only compute $_{d-1}h^*_{d-2}$ to compute $h^*_{d-1}$, which we do recursively for $d\geq3$. These numbers are given below in low dimension. For $i\geq d$, if $L_i$ does not occur in the formula for $h_{d-1}^*$, we still include it with a coefficient of 0. This is to make the patterns as $d$ gets larger more obvious.

\begin{center}
$\begin{array}{l|c}
d & h^*_{d-1}\\
 & \\ 
\hline
 & \\ 
1 & 1+0L_1\\
 & \\  
2 & -2+L_1+L_2\\
 & \\ 
3 & 3-2L_1-0L_2+4L_3\\
 & \\ 
4 & -4+3L_1-L_2-3L_3+11L_4\\
 & \\ 
5 & 5-4L_1+2L_2+2L_3-10L_4+26L_5\\
 & \\ 
6 & -6+5L_1-3L_2-L_3+9L_4-25L_5+57L_6\\
 & \\ 
7 & 7-6L_1+4L_2+0L_3-8L_4+24L_5-56L_6+120L_7\\
 & \\ 
8 & -8+7L_1-5L_2+L_3+7L_4+23L_5-55L_6-119L_7+247L_8
\end{array}$
\end{center}

Recall that $a_{1,0}=0$. For general $d$, the constant term in the above table is $(-1)^{d+1}d$. The coefficient of $L_d$ is $a_{d,d-1}$. The remaining coefficients follow an interesting pattern. To see this, follow $L_1$ through the table. The coefficient of $L_1$ is $(-1)^d(d-1)$. More interestingly, follow $L_2$ through the table. The coefficients are $1,0,-1,2,-3,4,\dots$. The first coefficient on the list is $a_{2,1}$. The next coefficient is $a_{2,1}-1=1-1=0$. We will show in Proposition 4 below that the general pattern is
\begin{center}
$(-1)^d(a_{2,1}-(d-2))$.
\end{center}

We next follow $L_3$ through the table. The coefficients of $L_3$ are 
\begin{center}
$4,-3,2,-1,0,1,-2,3,\dots$. 
\end{center}
The first coefficient is $a_{3,2}=4$. The next coefficient is $(-1)(a_{3,2}-1)=-3$. The next coefficient is $(-1)^2(a_{3,2}-2)=2$, and the next is $(-1)^3(a_{3,2}-3)=-1$. From then on out, the coefficient is of the form $(-1)^{d+1}(d-7)$. In a different form, we have that the coefficient of $L_3$ is the following for $d\geq3$:
\begin{center}
$(-1)^{d+1}(a_{3,2}-(d-3))$.
\end{center}

The difference between the $L_2$ and the $L_3$ case is that, after getting to zero, the coefficients of $L_2$ start negative, while the coefficients of $L_3$ start positive. This occurs because the sum $d+a_{d,d-1}=d+2^d-d-1=2^d-1$ is always odd. Therefore, the coefficient of $L_k$ is zero in an odd row. If $k$ is even, the next step will be $-1$, and, if $k$ is odd, the next step will be $+1$. We deal with these cases separately.

\begin{prop}
Let $k$ be odd. Then, for $d\geq k$, the coefficient of $L_k$ in $h^*_{d-1}$ is
\begin{center}
$(-1)^{d+1}(a_{k,k-1}-(d-k))=(-1)^{d+1}(2^k-1-d)$.
\end{center}
\end{prop}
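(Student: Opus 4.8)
The plan is to read the coefficient of $z^{d-1}$ directly from the rational form in Corollary 1, rather than iterating the stated recursion. Writing $\text{Ehr}_\mathcal{P}(z)$ over the common denominator $(1-z)^{d+1}$, its numerator is $N(z)=(1-z)^d+\sum_{i=1}^d L_i\,\phi_i(z)(1-z)^{d-i}$, a polynomial of degree at most $d$; hence $\sum_{i=0}^d h^*_i z^i=N(z)$ and in particular $h^*_{d-1}=[z^{d-1}]N(z)$. Since each $L_i$ enters linearly and only through the $i$-th summand, the coefficient of $L_k$ in $h^*_{d-1}$ is exactly $[z^{d-1}]\bigl(\phi_k(z)(1-z)^{d-k}\bigr)$. (The constant term $(-1)^{d+1}d$ in the table is the leftover $[z^{d-1}](1-z)^d$.)

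Next I would expand $\phi_k(z)=\sum_{j=0}^k a_{k,j}z^j$, recalling $a_{k,0}=0$ and that the leading coefficient is $a_{k,k}=1$, together with $(1-z)^{d-k}=\sum_{m=0}^{d-k}(-1)^m\binom{d-k}{m}z^m$. Then $[z^{d-1}]\bigl(\phi_k(z)(1-z)^{d-k}\bigr)=\sum_{j+m=d-1}a_{k,j}(-1)^m\binom{d-k}{m}$, and the constraints $0\le m\le d-k$, $0\le j\le k$ force $j\ge k-1$. For $d>k$ only $j=k-1$ (contributing $a_{k,k-1}(-1)^{d-k}$) and $j=k$ (contributing $a_{k,k}(-1)^{d-1-k}\binom{d-k}{1}=(d-k)(-1)^{d-1-k}$) remain, giving the total $(-1)^{d-k}\bigl(a_{k,k-1}-(d-k)\bigr)$; the edge case $d=k$ forces $j=k-1$ only and yields $a_{k,k-1}$, consistent with the claimed formula at $d=k$.

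To finish, since $k$ is odd we have $(-1)^{d-k}=(-1)^{d+1}$, so the coefficient is $(-1)^{d+1}\bigl(a_{k,k-1}-(d-k)\bigr)$, and substituting the known value $a_{k,k-1}=2^k-k-1$ gives $(-1)^{d+1}(2^k-1-d)$. There is no real obstacle here beyond careful bookkeeping: the one point that must be checked is that the top Eulerian number $a_{k,k}$ equals $1$ and that no term of $\phi_k$ of degree below $k-1$ can contribute to degree $d-1$, so precisely the top two Eulerian numbers of $\phi_k$ enter. Alternatively one can prove the statement by induction on $d$ through the paper's recursion $h^*_{d-1}={}_{d-1}h^*_{d-1}-{}_{d-1}h^*_{d-2}+a_{d,d-1}L_d$, using Proposition 2 for ${}_{d-1}h^*_{d-1}$ and the inductive hypothesis (in dimension $d-1$) for ${}_{d-1}h^*_{d-2}$, with base case $d=k$ where only the $a_{d,d-1}L_d$ term affects the coefficient of $L_k$; the sign manipulation at the end is identical.
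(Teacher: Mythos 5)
Your argument is correct, but it takes a genuinely different route from the paper's. The paper generates the table of $h^*_{d-1}$ row by row via the recursion $h^*_{d-1}={}_{d-1}h^*_{d-1}-{}_{d-1}h^*_{d-2}+a_{d,d-1}L_d$ and proves the coefficient formula by induction on $d$, splitting into cases according to the parity of $n$. You instead clear denominators in Corollary 1 once and for all, writing the numerator as $N(z)=(1-z)^d+\sum_{i=1}^d L_i\,\phi_i(z)(1-z)^{d-i}$, so that the coefficient of $L_k$ in $h^*_{d-1}$ is the single quantity $[z^{d-1}]\bigl(\phi_k(z)(1-z)^{d-k}\bigr)$. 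Your bookkeeping checks out: since $\deg\phi_k=k$ and $m\le d-k$, only $j=k-1$ and $j=k$ contribute, $a_{k,k}=1$, and $\binom{d-k}{d-1-k}=d-k$, giving $(-1)^{d-k}\bigl(a_{k,k-1}-(d-k)\bigr)$, which is $(-1)^{d+1}\bigl(a_{k,k-1}-(d-k)\bigr)$ for odd $k$; the $d=k$ edge case and the constant term $(-1)^{d+1}d$ both come out right. What your approach buys: it is closed-form rather than inductive, it treats $d=k$ and $d>k$ uniformly, it proves Propositions 3 and 4 simultaneously (the parity of $k$ enters only in the final sign conversion $(-1)^{d-k}=(-1)^{d\pm k}$), and it generalizes at no extra cost to the coefficient of $L_k$ in any $h^*_j$. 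What the paper's induction buys is that it tracks exactly how the table in Section 3 was built, so the proof doubles as a verification of that recursion.
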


\begin{proof}
We proceed by induction on $d$. If $d=k$, then we have
\begin{center}
$(-1)^{d+1}(a_{k,k-1}-(d-d))=a_{k,k-1}$.
\end{center}
Now, assume the proposition holds for $d=n-1>k$. So, in row $n-1$, the coefficient of $L_k$ is 
\begin{center}
$(-1)^{n}(a_{k,k-1}-(n-1-d))$. 
\end{center}
Let $d=n$. Then, ignoring $L_n$, $h_{n-1}^*$ is formed by taking the $n-1^{\text{st}}$ row of the table and subtracting it from $\displaystyle\sum_{i=0}^{n-1}(-1)^{n-1+i}L_i$. The coefficient of $L_k$ in the sum is $(-1)^{n-1+k}$. So, the coefficient of $L_k$, when $d=n$, is
\begin{center}
$(-1)^{n-1+k}-(-1)^{n}(a_{k,k-1}-(n-1-d))$.
\end{center}
If $n$ is odd, then $n-1+k$ is also odd, so we have
\begin{center}
$-1+(a_{k,k-1}-(n-1-d))=a_{k,k-1}-(n-d)=(-1)^{n+1}a_{k,k-1}-(n-d)$,
\end{center}
as desired. If $n$ is even, then $n-1+k$ is also even. So we have
\begin{center}
$1-(a_{k,k-1}-(n-1-d))=-a_{k,k-1}+(n-d)=(-1)^{n+1}(a_{k,k-1}-(n-d))$.
\end{center}
In either case, the claim is proved.
\end{proof}

\begin{prop}
Let $k$ be even. Then for $d\geq k$, the coefficient of $L_k$ in $h^*_{d-1}$ is
\begin{center}
$(-1)^d(a_{k,k-1}-(d-k))=(-1)^d(2^k-1-d)$.
\end{center}
\end{prop}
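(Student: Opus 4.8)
The plan is to argue exactly as in the proof of Proposition 3, with the two parity cases interchanged; since $k$ is now even the bookkeeping in fact collapses to a single case, so the argument is slightly shorter. I would induct on $d$, taking $d=k$ as the base case. When $d=k$, the only contribution to the coefficient of $L_k$ in $h^*_{d-1}$ comes from the term $L_d\phi_d(z)/(1-z)^{d+1}$ in Corollary 1, namely $a_{d,d-1}=a_{k,k-1}$; and since $k$ is even, $(-1)^d(a_{k,k-1}-(d-k))=(-1)^k a_{k,k-1}=a_{k,k-1}$, so the formula holds.

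For the inductive step I would assume the statement for $d=n-1>k$, so that the coefficient of $L_k$ in row $n-1$ of the table — that is, in ${}_{n-1}h^*_{n-2}$ — is $(-1)^{n-1}\bigl(a_{k,k-1}-(n-1-k)\bigr)$. By the recursion described before Proposition 3, for a polytope of dimension $n$ we have $h^*_{n-1}={}_{n-1}h^*_{n-1}-{}_{n-1}h^*_{n-2}+a_{n,n-1}L_n$, and since $k<n$ the term $a_{n,n-1}L_n$ does not affect the coefficient of $L_k$. In ${}_{n-1}h^*_{n-1}=\sum_{i=0}^{n-1}(-1)^{n-1+i}L_i$ the coefficient of $L_k$ is $(-1)^{n-1+k}$, and because $k$ is even this equals $(-1)^{n-1}$ for every $n$. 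Combining these, the coefficient of $L_k$ in $h^*_{n-1}$ is
\begin{align*}
(-1)^{n-1}-(-1)^{n-1}\bigl(a_{k,k-1}-(n-1-k)\bigr) &= (-1)^{n-1}\bigl(n-k-a_{k,k-1}\bigr)\\
&= (-1)^{n}\bigl(a_{k,k-1}-(n-k)\bigr),
\end{align*}
which is the asserted formula for $d=n$. Substituting the known value $a_{k,k-1}=2^k-k-1$ rewrites this as $(-1)^d(2^k-1-d)$.

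I do not expect a genuine obstacle: the proof is a parity variant of Proposition 3, and the only points requiring care are the index shift $n-1-k$ in the inductive hypothesis and the observation that, in contrast to Proposition 3, one need not split on the parity of $n$, since $(-1)^{n-1+k}=(-1)^{n-1}$ whenever $k$ is even. For a uniform treatment one could in fact merge Propositions 3 and 4 into a single statement by writing the coefficient of $L_k$ as $(-1)^{d+k}\bigl(a_{k,k-1}-(d-k)\bigr)$ for all $k\leq d$; I would mention this but keep the two statements separate for readability.
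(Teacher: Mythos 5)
Your proof is correct and follows essentially the same route as the paper: induction on $d$ with base case $d=k$, using the recursion $h^*_{n-1}={}_{n-1}h^*_{n-1}-{}_{n-1}h^*_{n-2}+a_{n,n-1}L_n$ and the inductive hypothesis for the coefficient of $L_k$ in row $n-1$. The only difference is cosmetic but welcome: you observe that $(-1)^{n-1+k}=(-1)^{n-1}$ for even $k$ and so avoid the paper's case split on the parity of $n$, and in doing so you also correctly write the index shift as $n-1-k$ where the paper's displayed formulas contain a typographical $n-1-d$.
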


\begin{proof}
We again proceed by induction on $d$. If $d=k$, we have
\begin{center}
$(-1)^k(a_{k,k-1}-(k-k))=a_{k,k-1}$,
\end{center}
since $(-1)^k=1$, as $k$ is even. Next, assume the claim holds when $d=n-1$. So, in row $n-1$ of the table, the coefficient of $L_k$ is
\begin{center}
$(-1)^{n-1}(a_{k,k-1}-(n-1-d))$.
\end{center}
Let $d=n$. Then, ignoring $L_n$, $h^*_{n-1}$ is formed by taking the $n-1^{\text{st}}$ row of the table and subtracting it from $\displaystyle\sum_{i=0}^{n-1}(-1)^{n-1+i}L_i$. The coefficient of $L_k$ in the sum is $(-1)^{n-1+k}$. So, the coefficient of $L_k$, when $d=n$, is
\begin{center}
$(-1)^{n-1+k}-(-1)^{n-1}(a_{k,k-1}-(n-1-d))$.
\end{center}
If $n$ is odd, then $n-1-k$ is even, so we have
\begin{center}
$1-(a_{k,k-1}-(n-1-d))=-(a_{k,k-1}-(n-d))=(-1)^n(a_{k,k-1}-(n-d))$.
\end{center}
If $n$ is even, then $n-1+k$ is odd, so we have
\begin{center}
$-1+(a_{k,k-1}-(n-1-d))=a_{k,k-1}-(n-d)=(-1)^n(a_{k,k-1}-(n-d))$.
\end{center}
In either case, the claim holds.
\end{proof}

Propositions 3 and 4, along with the constant term $(-1)^{d+1}d$ describe every coefficient of the expansion of $h^*_{d-1}$ in terms of the coefficients of the Ehrhart polynomial. Putting all of this together yields the following linear inequality.

\begin{thm}
Let $d$ be a positive integer. Let $c_0=(-1)^{d+1}d$, and, for $k\geq1$, let
\begin{center}
$c_k=\begin{cases}
			(-1)^{d+1}(2^k-1-d) & \text{, if }k\text{ is odd}\\
			(-1)^{d}(2^k-1-d) & \text{, if }k\text{ is even}
	 \end{cases}$
\end{center}
Then the Ehrhart polynomial of $\mathcal{P}$ satisfies the linear inequality
\begin{center}
$c_0L_0+c_1L_1+\dots+c_dL_d\geq0$.
\end{center}
\end{thm}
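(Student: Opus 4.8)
The plan is to harvest the computations already performed and fuse them with Stanley's non-negativity theorem. The quantity $h^*_{d-1}$ is one of the numerator coefficients of $\text{Ehr}_{\mathcal{P}}(z)$, hence a non-negative integer by \cite{S1}. So it suffices to prove the identity $h^*_{d-1}=c_0L_0+c_1L_1+\dots+c_dL_d$ with the $c_k$ as in the statement; the inequality is then immediate.

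The first step is to invoke the recursive description given just before Proposition 3: expanding $\text{Ehr}_{\mathcal{P}}(z)$ via Corollary 1 and extracting the coefficient of $z^{d-1}$ in the numerator yields
\[
h^*_{d-1}={}_{d-1}h^*_{d-1}-{}_{d-1}h^*_{d-2}+a_{d,d-1}L_d,
\]
which is a linear form in $L_0,\dots,L_d$; write it as $\sum_{k=0}^d c_kL_k$. For $1\le k\le d$, Proposition 3 (for $k$ odd) and Proposition 4 (for $k$ even) identify $c_k$ exactly as $(-1)^{d+1}(2^k-1-d)$ and $(-1)^{d}(2^k-1-d)$, matching the two cases in the statement; as a consistency check, for $k=d$ both expressions collapse to $2^d-1-d=a_{d,d-1}$, which is also visible directly from the displayed recursion since ${}_{d-1}h^*_{d-1}$ and ${}_{d-1}h^*_{d-2}$ involve only $L_0,\dots,L_{d-1}$. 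For $k=0$, since $L_0=1$ the term $c_0L_0$ is the constant term of $h^*_{d-1}$, and I would pin this down by a one-line induction on $d$ off the same recursion: the constant term in row $d$ equals $(-1)^{d-1}$ minus the constant term in row $d-1$, i.e. $(-1)^{d-1}-(-1)^{d}(d-1)=(-1)^{d-1}d=(-1)^{d+1}d=c_0$, with base case $d=1$ (constant term $1$).

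Putting these together gives $h^*_{d-1}=\sum_{k=0}^d c_kL_k$, and non-negativity of $h^*_{d-1}$ yields the inequality. I do not expect any genuine obstacle here: the substantive arguments are already contained in Propositions 3 and 4, and this theorem is their corollary. The only points requiring care are bookkeeping ones --- confirming that Propositions 3 and 4 jointly account for every index $1\le k\le d$, that their parity-dependent signs align with the definition of $c_k$, and that the formula correctly produces genuine zero coefficients when $2^k-1-d=0$ (as for $k=2,d=3$ and $k=3,d=7$ in the table).
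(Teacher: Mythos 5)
Your proposal is correct and follows essentially the same route as the paper: the theorem is deduced by combining Propositions 3 and 4 with the non-negativity of $h^*_{d-1}$. In fact you are slightly more thorough than the paper's one-line proof, since you also verify the constant term $c_0=(-1)^{d+1}d$ by induction, which the paper asserts in the preceding discussion without a formal argument.
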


\begin{proof}
This follows immediately from Propositions 3 and 4, as well as the fact that $h_{d-1}^*$ is positive.
\end{proof}

We finish by computing the $h^*$-polynomial for a 4-dimensional polytope. The recursive nature of the formula in Corollary 3 makes this simple (if a bit tedious in higher dimension). Recall that the numerator of $\text{Ehr}(z)$ for a 3-dimensional polytope has the form
\begin{center}
$N(z)=1+(L_1+L_2+L_3-3)z+(-2L_1+4L_3+3)z^2 + (L_1-L_2+L_3-1)z^3$.
\end{center}
The numerator of $\text{Ehr}(z)$ for a 4-dimensional polytope is then
\begin{center}
$(1-z)N(z)+L_4\phi_4(z)$.
\end{center}
Note that $\phi_4(z)=z+11z^2+11z^3+z^4$. A simple computation yields
\begin{center}
$h^*_0=1$\\
$h^*_1=L_1+L_2+L_3+L_4-4$\\
$h^*_2=-3L_1-L_2+3L_3+11L_4+6$\\
$h^*_3=3L_1-L_2-3L_3+11L_4-4$\\
$h^*_4=1-L_1+L_2-L_3+L_4$.
\end{center}
The only value above that is not directly computable using the Propositions of this paper is $h^*_2$. This term provides the inequality
\begin{center}
$-3L_1-L_2+3L_3+11L_4\geq-6$
\end{center}
in addition to the inequalities given by the Propositions. This could be continued in two ways: (1) we could compute the $h^*$-vector (and obtain inequalities) in higher dimensions by computing $(1-z)N(z)+L_d\phi_d(z)$, where $N(z)$ is the numerator of the Ehrhart series in dimension $d$, and (2) we could compute the values of the $h^*$-vector for general $d$ when nice formulas for the Eulerian numbers are known. For the latter, it is known that
\begin{center}
$a_{d,d-2}=3^{d+2}-(d+3)2^{d+2}+\displaystyle\frac{(d+2)(d+3)}{2}$. (see \cite{EN2})
\end{center}
This could be used to compute $h_{d-2}^*$ in general $d$.

\begin{bibdiv}
\begin{biblist}

\bib{BR}{book}{
   author={Beck, Matthias},
   author={Robins, Sinai},
   title={Computing the continuous discretely},
   series={Undergraduate Texts in Mathematics},
   edition={2},
   note={Integer-point enumeration in polyhedra;
   With illustrations by David Austin},
   publisher={Springer, New York},
   date={2015},
   pages={xx+285},
   isbn={978-1-4939-2968-9},
   isbn={978-1-4939-2969-6},
   review={\MR{3410115}},
   doi={10.1007/978-1-4939-2969-6},
}

\bib{BDDPS}{article}{
   author={Beck, M.},
   author={De Loera, J. A.},
   author={Develin, M.},
   author={Pfeifle, J.},
   author={Stanley, R. P.},
   title={Coefficients and roots of Ehrhart polynomials},
   conference={
      title={Integer points in polyhedra---geometry, number theory, algebra,
      optimization},
   },
   book={
      series={Contemp. Math.},
      volume={374},
      publisher={Amer. Math. Soc., Providence, RI},
   },
   date={2005},
   pages={15--36},
   review={\MR{2134759}},
}

\bib{EE}{article}{
   author={Ehrhart, Eug\`ene},
   title={Sur les poly\`edres rationnels homoth\'{e}tiques \`a $n$ dimensions},
   language={French},
   journal={C. R. Acad. Sci. Paris},
   volume={254},
   date={1962},
   pages={616--618},
   issn={0001-4036},
}

\bib{S1}{article}{
   author={Stanley, Richard P.},
   title={Decompositions of rational convex polytopes},
   journal={Ann. Discrete Math.},
   volume={6},
   date={1980},
   pages={333--342},
}

\bib{S2}{book}{
   author={Stanley, Richard P.},
   title={Enumerative combinatorics. Volume 1},
   series={Cambridge Studies in Advanced Mathematics},
   volume={49},
   edition={2},
   publisher={Cambridge University Press, Cambridge},
   date={2012},
   pages={xiv+626},
   isbn={978-1-107-60262-5},
}

\bib{EN}{webpage}{
   title={The Online Encyclopedia of Integer Sequences},
   date={2023},
   url={https://oeis.org/A008292},
}

\bib{EN1}{webpage}{
   title={The Online Encyclopedia of Integer Sequences},
   date={2023},
   url={https://oeis.org/A000295},
}

\bib{EN2}{webpage}{
   title={The Online Encyclopedia of Integer Sequences},
   date={2023},
   url={https://oeis.org/A000460},
}

\end{biblist}
\end{bibdiv}

\end{document}